\documentclass[oneside,10pt]{article}

\usepackage[letterpaper,margin=1in]{geometry}
\usepackage{amsfonts,amsmath,latexsym,amssymb,xcolor}
\usepackage{amsthm}
\usepackage{mathrsfs,upref}
\usepackage{mathptmx}
\usepackage{authblk}

\theoremstyle{plain}
\newtheorem{theorem}{Theorem}[section]

\theoremstyle{definition}
\newtheorem{definition}[theorem]{Definition}

\newtheorem{remark}[theorem]{Remark}

\title{\textbf{Multiorder Fractional Operators: The Conformable Multiorder Derivative and Its Associated Multiorder Integral}}

\author{Carlos E. Cadenas R.$^{1,2}$}

\date{\today}

\begin{document}

\maketitle

\begin{center}
\textit{$^1$Departamento de Matem\'aticas, FaCyT, Universidad de Carabobo. Valencia, Carabobo, Venezuela.\\
$^2$Centro Multidisciplinario de Visualizaci\'on y C\'omputo Cient\'ifico, Universidad de Carabobo. Venezuela.\\
\texttt{ccadenas@uc.edu.ve}}

\textit{ORCID iD: 0000-0003-3828-2560}
\end{center}

\begin{abstract}
This paper introduces a novel class of non-linear multiorder fractional differential operators. First, we provide intuitive and formal justifications for the existence of multiorder differential operators through differential analysis and Omega-derivative formulations. We then define the conformable multiorder derivative as a functional quotient of fractional derivatives and establish its analytical properties rigorously. Furthermore, leveraging fundamental principles of mathematical analysis, we construct its inverse operator, termed the conformable multiorder integral. Detailed step-by-step proofs of all theoretical properties are presented. Illustrative examples and applications to solving elementary multiorder differential equations are thoroughly examined. Finally, comprehensive conclusions and strategic outlooks for prospective research directions are discussed.
\end{abstract}

\textbf{Keywords:} Conformable derivative; Multiorder derivative; Multiorder fractional integral; Fractional differential equations.

\textbf{MSC 2020:} 26A33, 34A08, 35R11.

\section{Introduction}

From the foundational developments of fractional calculus to modern functional analysis, numerous operators, definitions, and notations have been introduced in the literature. Differential notation has continually evolved since the seminal work of Leibniz, who introduced the classical $n$-th order derivative $\frac{d^n}{dx^n}$ and the differential symbol $dx$. A parallel evolution occurred with operational symbols such as $D$ and integral operators. As fractional calculus expanded to model complex physical systems, various local and non-local fractional derivatives were developed, each introducing specific operational frameworks \cite{Atangana2025, Cajori1923, David2011, Machado2011, Ross1977}.

To introduce multiorder fractional operators, a unified and extensible operational framework is required. Initial steps toward this notation were discussed in \cite{Cadenas2025}; here, we formalize and expand upon this analytical foundation.

The primary objective of this work is to introduce two multiorder operators acting as multiorder differential and integral operators, to establish their rigorous mathematical properties, and to demonstrate their applicability in solving differential equations.

The paper is structured as follows: Section~\ref{sec:preliminaries} reviews the preliminary concepts of conformable fractional calculus \cite{Katugampola2014, Khalil2014}, including operational properties, chain rules \cite{Atangana2015, Asbab2025}, fractional differentials, and indefinite fractional integrals. Section~\ref{sec:multi_der} formalizes the conformable multiorder derivative via two distinct conceptual approaches and provides rigorous, step-by-step proofs of its structural properties. Section~\ref{sec:multi_int} constructs the conformable multiorder integral as the explicit inverse operator of the multiorder derivative. Section~\ref{sec:applications} provides exact analytical solutions to several multiorder differential equations, including cases analyzed in related frameworks \cite{Abdeljawad2015, Kareem2017}. Finally, Sections~\ref{sec:conclusions} and \ref{sec:final_comments} present comprehensive conclusions and outline future research directions.

\section{Preliminaries}\label{sec:preliminaries}

We begin by recalling the definition of the conformable fractional derivative introduced by Khalil et al. \cite{Khalil2014}.

\begin{definition}\label{def:Khalil_der}
Let $f:[0,\infty) \to \mathbb{R}$. The \textit{conformable fractional derivative} of $f$ of order $\alpha \in (0,1]$ at $x > 0$ is defined by:
\begin{equation}\label{eq:conf_khalil}
\frac{df(x)}{dx^{(\alpha)}} = \lim_{\epsilon \to 0} \frac{f(x + \epsilon x^{1-\alpha}) - f(x)}{\epsilon}.
\end{equation}
\end{definition}

In \cite{Khalil2014}, the notation $T_\alpha(f)(x)$ was originally employed. Throughout this paper, the notations $\frac{df(x)}{dx^{(\alpha)}}$, $(f)^{(\alpha)}(x)$, and $f^{(\alpha)}(x)$ are used interchangeably. An alternative definition proposed by Katugampola \cite{Katugampola2014} is given as follows:

\begin{definition}\label{def:Katugampola_der}
Let $f:[0,\infty) \to \mathbb{R}$ and $x > 0$. The conformable fractional derivative of $f$ of order $\alpha \in (0,1]$ is defined by:
\begin{equation}\label{eq:conf_katugampola}
\frac{df(x)}{dx^{(\alpha)}} = \lim_{\epsilon \to 0} \frac{f(x e^{\epsilon x^{-\alpha}}) - f(x)}{\epsilon}.
\end{equation}
\end{definition}

The fundamental properties of these derivatives are summarized below \cite{Katugampola2014, Khalil2014}.

\begin{theorem}\label{thm:conf_properties}
Let $\alpha \in (0,1]$ and let $f, g$ be $\alpha$-differentiable at $x > 0$. Then:
\begin{enumerate}
    \item Linear combination: $(af + bg)^{(\alpha)} = a f^{(\alpha)} + b g^{(\alpha)}$ for all $a, b \in \mathbb{R}$.
    \item Power rule: $(x^n)^{(\alpha)} = n x^{n-\alpha}$ for all $n \in \mathbb{R}$.
    \item Constant rule: $(C)^{(\alpha)} = 0$ for any constant function $f(x) = C$.
    \item Product rule: $(fg)^{(\alpha)} = f g^{(\alpha)} + g f^{(\alpha)}$.
    \item Quotient rule: $\left(\frac{f}{g}\right)^{(\alpha)} = \frac{g f^{(\alpha)} - f g^{(\alpha)}}{g^2}$, provided $g(x) \neq 0$.
    \item Classical connection: If $f$ is differentiable in the classical sense, then $f^{(\alpha)}(x) = x^{1-\alpha} \frac{df}{dx}(x)$.
\end{enumerate}
\end{theorem}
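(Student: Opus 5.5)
The plan is to prove item 6 first and then obtain the other items from it, with one separate argument for the product and quotient rules, which must hold for functions that are only $\alpha$-differentiable.

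\textbf{Item 6 (classical connection).} For Definition~\ref{def:Khalil_der}, set $h=\epsilon x^{1-\alpha}$. Since $x>0$, $h\to 0$ exactly when $\epsilon\to 0$, so
\[
\frac{f(x+\epsilon x^{1-\alpha})-f(x)}{\epsilon}=x^{1-\alpha}\,\frac{f(x+h)-f(x)}{h}\longrightarrow x^{1-\alpha}f'(x).
\]
For Definition~\ref{def:Katugampola_der}, expand $xe^{\epsilon x^{-\alpha}}=x+\epsilon x^{1-\alpha}+O(\epsilon^2)$. Put $h(\epsilon)=xe^{\epsilon x^{-\alpha}}-x$. Then $h(\epsilon)\neq 0$ for small $\epsilon\neq 0$, and $h(\epsilon)/\epsilon\to x^{1-\alpha}$. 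Writing the difference quotient as $\frac{f(x+h)-f(x)}{h}\cdot\frac{h}{\epsilon}$ gives the same limit.

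The converse is also needed: if $f$ is $\alpha$-differentiable at $x>0$, then $f$ is classically differentiable there. The same substitution $\epsilon=hx^{\alpha-1}$ is a bijection near $0$, so
\[
\frac{f(x+h)-f(x)}{h}=x^{\alpha-1}\,\frac{f(x+\epsilon x^{1-\alpha})-f(x)}{\epsilon}
\]
converges to $x^{\alpha-1}f^{(\alpha)}(x)$. For the Katugampola form, invert $h(\epsilon)$ locally; it is smooth and strictly monotone with nonzero derivative at $0$. This equivalence at points $x>0$ is the key lemma. I expect it to be the main technical point, mainly because of the care needed with the change of variable in the Katugampola case. Once it holds, every $\alpha$-derivative can be written as $f^{(\alpha)}=x^{1-\alpha}f'$.

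\textbf{Remaining items.} These follow by multiplying the classical rules by $x^{1-\alpha}$.
\begin{itemize}
\item Linearity: $(af+bg)'=af'+bg'$.
\item Constants: $C'=0$.
\item Power rule: $(x^n)'=nx^{n-1}$ for any real $n$ and $x>0$, which gives $x^{1-\alpha}\cdot nx^{n-1}=nx^{n-\alpha}$.
\item Product rule: $(fg)'=fg'+gf'$. Multiplying by $x^{1-\alpha}$ gives $fg^{(\alpha)}+gf^{(\alpha)}$.
\item Quotient rule: $(f/g)'=(gf'-fg')/g^2$ when $g(x)\neq 0$. Multiplying by $x^{1-\alpha}$ gives the stated formula.
\end{itemize}
Classical differentiability of $f$ and $g$ holds by the equivalence lemma, so the classical rules apply.

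\textbf{Direct alternative for the product rule.} One could also argue from Definition~\ref{def:Khalil_der} directly, writing
\[
f(x+\epsilon x^{1-\alpha})g(x+\epsilon x^{1-\alpha})-f(x)g(x)
\]
as $f(x+\epsilon x^{1-\alpha})\bigl[g(x+\epsilon x^{1-\alpha})-g(x)\bigr]+g(x)\bigl[f(x+\epsilon x^{1-\alpha})-f(x)\bigr]$. This requires continuity of $f$ at $x$, which follows because the increment equals $\epsilon$ times a bounded quotient. The route through the equivalence lemma is cleaner, so I would follow that one.
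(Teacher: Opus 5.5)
Your argument is correct, but it is organized differently from the paper's. The paper proves only items 4--6, and only for Definition~\ref{def:Khalil_der}:
\begin{itemize}
\item the product rule directly from the limit, by adding and subtracting $f(x)g(x+\epsilon x^{1-\alpha})$;
\item the quotient rule by applying the product rule to $f\cdot g^{-1}$ together with the identity $(g^{-1})^{(\alpha)}=-g^{-2}g^{(\alpha)}$;
\item item 6 in the forward direction only, via $h=\epsilon x^{1-\alpha}$.
\end{itemize}
Items 1--3 are left to the literature.

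Your pivot is instead the two-way equivalence, at points $x>0$ and for both definitions, between $\alpha$-differentiability and classical differentiability. After that, each item is just the corresponding classical rule multiplied by $x^{1-\alpha}$. This gives completeness and uniformity: all six items and both definitions are covered at once. It also settles two steps the paper passes over without comment:
\begin{itemize}
\item the convergence $g(x+\epsilon x^{1-\alpha})\to g(x)$, used when the paper splits the limit in the product rule (you also justify it directly in your alternative argument);
\item the formula for $(g^{-1})^{(\alpha)}$, which the paper asserts without proof and which for you is simply $x^{1-\alpha}\cdot(-g^{-2}g')$.
\end{itemize}
The paper's own chain rule, with the factor $[g(x)]^{1-\alpha}$ instead of $[g(x)]^{\alpha-1}$, would not reproduce that formula for $\alpha<1$.

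Your route also makes explicit that at $x>0$ the conformable derivative is just the weighted operator $x^{1-\alpha}\frac{d}{dx}$. That is exactly the fact the paper later uses to define $D^{(\alpha,\beta)}$. The paper's direct limit manipulation, for its part, does not rely on this reduction and mirrors the classical proof. It is therefore the template that would carry over to operators with no such reduction to the ordinary derivative. For the statement at hand, your version is the more complete and rigorous one.
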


\begin{proof}
We present the explicit proofs for properties 4, 5, and 6:
\begin{enumerate}
    \item[4.] By Definition~\ref{def:Khalil_der}:
    \begin{align*}
    (fg)^{(\alpha)}(x) &= \lim_{\epsilon \to 0} \frac{f(x + \epsilon x^{1-\alpha}) g(x + \epsilon x^{1-\alpha}) - f(x)g(x)}{\epsilon} \\
    &= \lim_{\epsilon \to 0} \frac{f(x + \epsilon x^{1-\alpha}) g(x + \epsilon x^{1-\alpha}) - f(x)g(x + \epsilon x^{1-\alpha}) + f(x)g(x + \epsilon x^{1-\alpha}) - f(x)g(x)}{\epsilon} \\
    &= \lim_{\epsilon \to 0} g(x + \epsilon x^{1-\alpha}) \frac{f(x + \epsilon x^{1-\alpha}) - f(x)}{\epsilon} + f(x) \lim_{\epsilon \to 0} \frac{g(x + \epsilon x^{1-\alpha}) - g(x)}{\epsilon} \\
    &= g(x) f^{(\alpha)}(x) + f(x) g^{(\alpha)}(x).
    \end{align*}
    
    \item[5.] Applying the product rule to $f(x) \cdot [g(x)]^{-1}$:
    \begin{align*}
    \left(\frac{f}{g}\right)^{(\alpha)} &= f^{(\alpha)} g^{-1} + f \left(g^{-1}\right)^{(\alpha)} = \frac{f^{(\alpha)}}{g} + f \left( -g^{-2} g^{(\alpha)} \right) = \frac{g f^{(\alpha)} - f g^{(\alpha)}}{g^2}.
    \end{align*}

    \item[6.] Assuming $f$ is classically differentiable at $x$, set $h = \epsilon x^{1-\alpha}$. As $\epsilon \to 0$, $h \to 0$. Thus:
    \[
    f^{(\alpha)}(x) = \lim_{\epsilon \to 0} \frac{f(x + \epsilon x^{1-\alpha}) - f(x)}{\epsilon} = \lim_{h \to 0} \frac{f(x + h) - f(x)}{h / x^{1-\alpha}} = x^{1-\alpha} \lim_{h \to 0} \frac{f(x + h) - f(x)}{h} = x^{1-\alpha} \frac{df}{dx}(x).
    \]
\end{enumerate}
\end{proof}

The chain rule for conformable fractional derivatives is stated as follows \cite{Atangana2015}:

\begin{theorem}\cite{Atangana2015}
Let $g$ be differentiable at $t$, and let $f$ be differentiable at $g(t)$. The conformable fractional derivatives \eqref{eq:conf_khalil} and \eqref{eq:conf_katugampola} obey the chain rule:
\begin{equation}
(f \circ g)^{(\alpha)}(x) = x^{1-\alpha} [g(x)]^{1-\alpha} \frac{dg(x)}{dx} f^{(\alpha)}(t)\Big|_{t=g(x)}.
\end{equation}
\end{theorem}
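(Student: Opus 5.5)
Everything reduces to the classical connection (property 6 of Theorem~\ref{thm:conf_properties}), $h^{(\alpha)}(x)=x^{1-\alpha}h'(x)$ for classically differentiable $h$, together with the classical chain rule. Since $g$ is differentiable at $x$ and $f$ is differentiable at $g(x)$, the composition $f\circ g$ is classically differentiable at $x$, with $(f\circ g)'(x)=f'(g(x))\,g'(x)$. Property 6 applied to $h=f\circ g$ at the point $x>0$ then gives
\[
(f\circ g)^{(\alpha)}(x)=x^{1-\alpha}f'(g(x))\,g'(x).
\]

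Next I express $f'(g(x))$ through the conformable derivative of $f$. If $g(x)>0$, property 6 applied to $f$ at $t=g(x)$ gives $f^{(\alpha)}(t)=t^{1-\alpha}f'(t)$, so
\[
f'(g(x))=[g(x)]^{\alpha-1}f^{(\alpha)}(t)\big|_{t=g(x)}.
\]
Substituting this yields
\[
(f\circ g)^{(\alpha)}(x)=x^{1-\alpha}[g(x)]^{\alpha-1}\frac{dg(x)}{dx}\,f^{(\alpha)}(t)\big|_{t=g(x)}.
\]

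The main obstacle is that this exponent on $g(x)$ is $\alpha-1$, whereas the displayed statement has $[g(x)]^{1-\alpha}$. A sanity check confirms the discrepancy: for $\alpha=1$ both agree, but for $f(t)=t$ we get $f^{(\alpha)}(t)=t^{1-\alpha}$, and the stated formula would give $x^{1-\alpha}g^{2(1-\alpha)}g'$ instead of the correct $x^{1-\alpha}g'$. So I would prove the formula with $[g(x)]^{\alpha-1}$, equivalently with $[g(x)]^{1-\alpha}$ in the denominator, and note that the statement as printed reads correctly only in this form, or with $f^{(\alpha)}$ understood as the classical derivative $f'$.

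For Definition~\ref{def:Katugampola_der} the argument is identical once the analogue of property 6 holds there. Write $xe^{\epsilon x^{-\alpha}}=x+\epsilon x^{1-\alpha}+O(\epsilon^2)$. Then the difference quotient equals $x^{1-\alpha}$ times a classical difference quotient with increment $h\to0$, plus a vanishing remainder. Hence $f^{(\alpha)}(x)=x^{1-\alpha}f'(x)$ for both definitions, and the chain rule follows for both simultaneously. I would also restrict to $g(x)>0$, since the conformable derivative of $f$ is only defined at positive points.
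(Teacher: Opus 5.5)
The paper gives no proof of this statement; it only quotes it from the cited reference, so there is no argument of the paper's to compare against. Your route is correct: property 6 of Theorem~\ref{thm:conf_properties} plus the classical chain rule, with the Katugampola case reduced to the same identity $f^{(\alpha)}(x)=x^{1-\alpha}f'(x)$. You are also right that the formula as printed is false, and your test $f(t)=t$ settles it. For instance, $\alpha=\tfrac12$, $g(x)=x^2$ gives $2x^{3/2}$ on the left and $2x^{7/2}$ on the printed right-hand side.

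The corrected identity $(f\circ g)^{(\alpha)}(x)=x^{1-\alpha}[g(x)]^{\alpha-1}g'(x)\,f^{(\alpha)}(g(x))$ is exactly what the paper's own multiorder chain rule (item 4 of Theorem~\ref{thm:multiorder_properties}) gives at $\beta=1$, since $D^{(\alpha,1)}y=x^{1-\alpha}y'$. This confirms that the printed exponent $1-\alpha$ is a sign slip.

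One correction to your side remark: reading $f^{(\alpha)}$ as the classical $f'$ does not rescue the printed formula. That reading gives $x^{1-\alpha}[g(x)]^{1-\alpha}g'(x)f'(g(x))=x^{1-\alpha}g'(x)f^{(\alpha)}(g(x))$, which is still off by the factor $[g(x)]^{1-\alpha}$; for $f(t)=t$ it yields $x^{1-\alpha}g^{1-\alpha}g'$ rather than $x^{1-\alpha}g'$. The correct form in terms of $f'$ is $x^{1-\alpha}g'(x)f'(g(x))$, with no power of $g$ at all. Your restriction to $x>0$, $g(x)>0$ is genuinely needed and should be kept in the statement you prove.
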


Furthermore, local inversion for conformable operators can be analyzed using inverse function theorems \cite{Asbab2025}. We now review the definitions of fractional differentials and conformable integrals \cite{Cadenas2025, Khalil2014}.

\begin{definition}
Let $f:\Omega \to \mathbb{R}$ be $\alpha$-differentiable on $(a,b)$. The differential of $f$ of order $\alpha$ ($0 < \alpha \le 1$) is defined as:
\begin{equation}
df^{(\alpha)} = f^{(\alpha)}(x) dx^{(\alpha)}.
\end{equation}
\end{definition}

\begin{definition}\label{def:conf_integral}
Let $f:[a,\infty) \to \mathbb{R}$ with $a \ge 0$. The conformable fractional integral of $f$ of order $\alpha \in (0,1)$ is defined by:
\begin{equation}
I_a^\alpha(f)(t) = \int_a^t f(x) dx^{(\alpha)} = \int_a^t \frac{f(x)}{x^{1-\alpha}} dx.
\end{equation}
The indefinite conformable integral is defined as $I^{(\alpha)}(f(x)) = \int f(x) dx^{(\alpha)} = F(x) + C$, where $F^{(\alpha)}(x) = f(x)$.
\end{definition}

\section{Conformable Multiorder Derivative}\label{sec:multi_der}

This section justifies the foundational concept of a multiorder derivative via two distinct formal approaches and establishes its analytical properties.

\subsection{First Approach: Multiorder Differential Analysis}

\begin{definition}
Let $w = w(x,y)$ be $\alpha$-differentiable with respect to $x$ and $\beta$-differentiable with respect to $y$. The multiorder differential of order $(\alpha, \beta)$ of $w$ is defined by:
\begin{equation}\label{eq:multi_diff_def}
dw^{(\alpha,\beta)} = \frac{\partial w}{\partial x^{(\alpha)}} dx^{(\alpha)} + \frac{\partial w}{\partial y^{(\beta)}} dy^{(\beta)},
\end{equation}
where $\frac{\partial w}{\partial x^{(\alpha)}}$ and $\frac{\partial w}{\partial y^{(\beta)}}$ represent arbitrary fractional derivative operators of orders $\alpha$ and $\beta$, respectively.
\end{definition}

Consider a level curve defined implicitly by $z(x,y) = y - f(x) = C$. Computing the multiorder differential \eqref{eq:multi_diff_def} yields:
\[
dz^{(\alpha,\beta)} = \frac{dy}{dy^{(\beta)}} dy^{(\beta)} - \frac{df(x)}{dx^{(\alpha)}} dx^{(\alpha)} = 0.
\]
Substituting $f(x) = y - C$ and leveraging the linearity of fractional derivatives (Theorem~\ref{thm:conf_properties}), we obtain:
\[
\frac{dy}{dy^{(\beta)}} dy^{(\beta)} - \frac{dy}{dx^{(\alpha)}} dx^{(\alpha)} = 0 \implies \frac{dy^{(\beta)}}{dx^{(\alpha)}} = \frac{\frac{dy}{dx^{(\alpha)}}}{\frac{dy}{dy^{(\beta)}}}.
\]
This relation motivates the operational definition of the multiorder differential operator:
\begin{equation}\label{eq:D_ratio}
D^{(\alpha,\beta)} y = \frac{dy^{(\beta)}}{dx^{(\alpha)}} = \frac{\frac{dy}{dx^{(\alpha)}}}{\frac{dy}{dy^{(\beta)}}}.
\end{equation}

\subsection{Second Approach: Omega-Derivative Framework}

An alternative formulation can be derived from the generalized Omega derivative framework introduced by Jeffery \cite{Jeffery1958} and further explored in \cite{Castillo2008, Castillo2024}.

\begin{definition}\cite{Castillo2024, Jeffery1958}
Let $f$ and $\Omega$ be real-valued functions defined on an open interval, where $\Omega$ is continuous and strictly increasing. The Omega derivative of $f$ with respect to $\Omega$ at $x_0$ is defined as:
\begin{equation}
D_\Omega f(x_0) = \lim_{x \to x_0} \frac{f(x) - f(x_0)}{\Omega(x) - \Omega(x_0)}.
\end{equation}
If $f'(x_0)$ and $\Omega'(x_0) \neq 0$ exist, then $D_\Omega f(x_0) = \frac{f'(x_0)}{\Omega'(x_0)}$.
\end{definition}

Defining a generalized Omega derivative as the ratio of two distinct fractional differential operators acting on dependent and independent variables directly recovers equation \eqref{eq:D_ratio}.

\subsection{Definition and Fundamental Properties}

By applying property 6 of Theorem~\ref{thm:conf_properties} to both the numerator and denominator of \eqref{eq:D_ratio}, where $\frac{dy}{dx^{(\alpha)}} = x^{1-\alpha} \frac{dy}{dx}$ and $\frac{dy}{dy^{(\beta)}} = y^{1-\beta}$, we obtain the formal definition of the conformable multiorder derivative:

\begin{definition}\label{def:multiorder_der}
Let $y: [0,\infty) \to A \subset \mathbb{R}^+$ be a differentiable function. The conformable multiorder derivative of order $\beta$ of $y$ with respect to $x$ of order $\alpha$ is defined by:
\begin{equation}\label{eq:multiorder_def_formula}
D^{(\alpha,\beta)} y = \frac{dy^{(\beta)}}{dx^{(\alpha)}} = y^{\beta-1} x^{1-\alpha} \frac{dy}{dx},
\end{equation}
for all $x > 0$ and $\alpha, \beta \in [0, 1]$.
\end{definition}

\begin{remark}
If $y$ is non-differentiable in the classical sense but $\alpha$-differentiable, Definition~\ref{def:Khalil_der} applies directly to the numerator. Note that due to the non-linear factor $y^{\beta-1}$, $D^{(\alpha,\beta)}$ is a non-linear operator.
\end{remark}

We now establish the operational algebra of $D^{(\alpha,\beta)}$.

\begin{theorem}\label{thm:multiorder_properties}
Let $f$ and $g$ be differentiable functions at $x > 0$, and let $\alpha, \beta \in [0, 1]$. The operator $D^{(\alpha,\beta)}$ satisfies:
\begin{enumerate}
    \item Scalar Scaling: $D^{(\alpha,\beta)}(a f) = a^\beta D^{(\alpha,\beta)}(f)$ for any $a > 0$.
    \item Product Rule: $D^{(\alpha,\beta)}(f g) = g^\beta D^{(\alpha,\beta)}(f) + f^\beta D^{(\alpha,\beta)}(g)$.
    \item Quotient Rule: $D^{(\alpha,\beta)}\left(\frac{f}{g}\right) = \frac{g^\beta D^{(\alpha,\beta)}(f) - f^\beta D^{(\alpha,\beta)}(g)}{g^{2\beta}}$, provided $g(x) \neq 0$.
    \item Chain Rule: $D^{(\alpha,\beta)}(f(g(x))) = [g(x)]^{\alpha-\beta} \left[ D^{(\alpha,\beta)}(f(t)) \right]_{t=g(x)} D^{(\alpha,\beta)}(g(x))$.
    \item Order Shift: $D^{(\alpha+\rho, \beta+\eta)} y = y^\eta x^{-\rho} D^{(\alpha,\beta)} y$, for $0 \le \alpha+\rho \le 1$ and $0 \le \beta+\eta \le 1$.
\end{enumerate}
\end{theorem}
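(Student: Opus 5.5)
The plan is to reduce every identity to the explicit formula \eqref{eq:multiorder_def_formula}, $D^{(\alpha,\beta)}y = y^{\beta-1}x^{1-\alpha}y'$, and then use the classical rules for $\frac{d}{dx}$ together with the laws of exponents. Throughout, I assume that $f$, $g$ (and $f/g$, $fg$, $f\circ g$) take values in $\mathbb{R}^+$, as in Definition~\ref{def:multiorder_der}. This makes real powers such as $f^{\beta-1}$ and $g^{\alpha-\beta}$ well defined and justifies $(uv)^{\gamma}=u^\gamma v^\gamma$. I will state this standing hypothesis explicitly at the start of the proof, since the statement only says "differentiable". In each item I expand the left-hand side, expand the right-hand side, and match powers.

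For item~1, the chain of equalities is
\[
D^{(\alpha,\beta)}(af)=(af)^{\beta-1}x^{1-\alpha}\,a f' = a^{\beta}\,f^{\beta-1}x^{1-\alpha}f' = a^\beta D^{(\alpha,\beta)}f,
\]
and this is where $a>0$ is used.

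For item~2, write $(fg)^{\beta-1}=f^{\beta-1}g^{\beta-1}$ and apply the classical product rule $(fg)'=f'g+fg'$. Distributing gives $g^{\beta}\,f^{\beta-1}x^{1-\alpha}f' + f^{\beta}\,g^{\beta-1}x^{1-\alpha}g'$, which is the claim.

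For item~3, I prefer a direct computation rather than invoking item~2 with $g^{-1}$, because $D^{(\alpha,\beta)}$ is nonlinear. Expand $(f/g)^{\beta-1}x^{1-\alpha}(f'g-fg')/g^2$. The first term reduces to $g^{-\beta}f^{\beta-1}x^{1-\alpha}f'$ and the second to $f^{\beta}g^{-\beta-1}x^{1-\alpha}g'$. Multiplying out the claimed right-hand side over $g^{2\beta}$ gives exactly the same two terms.

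For item~5, the computation is a single line:
\[
y^{\beta+\eta-1}x^{1-\alpha-\rho}y' = y^{\eta}x^{-\rho}\cdot y^{\beta-1}x^{1-\alpha}y'.
\]
The constraints on $\alpha+\rho$ and $\beta+\eta$ serve only to keep the new orders inside $[0,1]$, so that the left-hand side is a legitimate instance of the definition.

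The only step needing care is item~4, the chain rule, because it evaluates the operator at the inner variable $t=g(x)$, where the factor is $t^{1-\alpha}$ rather than $x^{1-\alpha}$. The left-hand side is $[f(g)]^{\beta-1}x^{1-\alpha}f'(g)g'$ by the classical chain rule. On the right-hand side, $[D^{(\alpha,\beta)}f(t)]_{t=g}=f(g)^{\beta-1}g^{1-\alpha}f'(g)$ and $D^{(\alpha,\beta)}g=g^{\beta-1}x^{1-\alpha}g'$. The powers of $g$ then collect as $(\alpha-\beta)+(1-\alpha)+(\beta-1)=0$, so the prefactor $[g(x)]^{\alpha-\beta}$ exactly cancels the spurious $g$-powers and the two sides agree. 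I will present this exponent bookkeeping explicitly, since it is the one place where an error in the stated prefactor would show up.
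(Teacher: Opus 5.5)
Your proposal is correct and matches the paper's proof essentially line by line. Each item is checked by expanding $D^{(\alpha,\beta)}y = y^{\beta-1}x^{1-\alpha}y'$ on both sides, and the quotient rule is done by the same direct computation. For the chain rule, both arguments observe that $[D^{(\alpha,\beta)}f(t)]_{t=g(x)}\,D^{(\alpha,\beta)}g(x)$ carries an extra factor $[g(x)]^{\beta-\alpha}$, which the prefactor $[g(x)]^{\alpha-\beta}$ cancels. Stating explicitly that all functions involved are $\mathbb{R}^+$-valued is a useful clarification the paper leaves implicit in Definition~\ref{def:multiorder_der}; for item~4 you should likewise note that $f$ must be differentiable at $g(x)$, not merely at $x$.
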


\begin{proof}
Step-by-step proofs for each property:
\begin{enumerate}
    \item[1.] Using Definition~\ref{def:multiorder_der}:
    \[
    D^{(\alpha,\beta)}(af) = (af)^{\beta-1} x^{1-\alpha} \frac{d(af)}{dx} = a^{\beta-1} f^{\beta-1} x^{1-\alpha} a \frac{df}{dx} = a^\beta f^{\beta-1} x^{1-\alpha} \frac{df}{dx} = a^\beta D^{(\alpha,\beta)}(f).
    \]

    \item[2.] Expanding $D^{(\alpha,\beta)}(fg)$:
    \begin{align*}
    D^{(\alpha,\beta)}(fg) &= (fg)^{\beta-1} x^{1-\alpha} \frac{d(fg)}{dx} = f^{\beta-1} g^{\beta-1} x^{1-\alpha} \left( g \frac{df}{dx} + f \frac{dg}{dx} \right) \\
    &= g^\beta \left( f^{\beta-1} x^{1-\alpha} \frac{df}{dx} \right) + f^\beta \left( g^{\beta-1} x^{1-\alpha} \frac{dg}{dx} \right) = g^\beta D^{(\alpha,\beta)}(f) + f^\beta D^{(\alpha,\beta)}(g).
    \end{align*}

    \item[3.] Expanding $D^{(\alpha,\beta)}\left(f g^{-1}\right)$ using the product rule and scalar properties:
    \begin{align*}
    D^{(\alpha,\beta)}\left(\frac{f}{g}\right) &= \left(\frac{f}{g}\right)^{\beta-1} x^{1-\alpha} \left( \frac{g \frac{df}{dx} - f \frac{dg}{dx}}{g^2} \right) = \frac{f^{\beta-1}}{g^{\beta-1}} x^{1-\alpha} \left( \frac{\frac{df}{dx}}{g} - \frac{f \frac{dg}{dx}}{g^2} \right) \\
    &= \frac{g^\beta \left( f^{\beta-1} x^{1-\alpha} \frac{df}{dx} \right) - f^\beta \left( g^{\beta-1} x^{1-\alpha} \frac{dg}{dx} \right)}{g^{2\beta}} = \frac{g^\beta D^{(\alpha,\beta)}(f) - f^\beta D^{(\alpha,\beta)}(g)}{g^{2\beta}}.
    \end{align*}

    \item[4.] Let $y = f(g(x))$ and $t = g(x)$. By classical chain rule $\frac{dy}{dx} = f'(g(x)) g'(x)$:
    \begin{align*}
    D^{(\alpha,\beta)}(f(g(x))) &= [f(g(x))]^{\beta-1} x^{1-\alpha} f'(g(x)) g'(x).
    \end{align*}
    Evaluating $\left[ D^{(\alpha,\beta)} f(t) \right]_{t=g(x)} = [f(g(x))]^{\beta-1} [g(x)]^{1-\alpha} f'(g(x))$ and $D^{(\alpha,\beta)}g(x) = [g(x)]^{\beta-1} x^{1-\alpha} g'(x)$, their product yields:
    \begin{align*}
    \left[ D^{(\alpha,\beta)} f(t) \right]_{t=g(x)} D^{(\alpha,\beta)} g(x) &= \left( [f(g(x))]^{\beta-1} [g(x)]^{1-\alpha} f'(g(x)) \right) \left( [g(x)]^{\beta-1} x^{1-\alpha} g'(x) \right) \\
    &= [g(x)]^{\beta-\alpha} \left( [f(g(x))]^{\beta-1} x^{1-\alpha} f'(g(x)) g'(x) \right).
    \end{align*}
    Multiplying by $[g(x)]^{\alpha-\beta}$ confirms the identity.

    \item[5.] By definition:
    \[
    D^{(\alpha+\rho, \beta+\eta)} y = y^{\beta+\eta-1} x^{1-(\alpha+\rho)} \frac{dy}{dx} = y^\eta x^{-\rho} \left( y^{\beta-1} x^{1-\alpha} \frac{dy}{dx} \right) = y^\eta x^{-\rho} D^{(\alpha,\beta)} y.
    \]
\end{enumerate}
\end{proof}

\subsection{Multiorder Derivatives of Elementary Functions}

Applying Definition~\ref{def:multiorder_der} directly yields the following explicitly derived expressions ($x > 0$):
\begin{itemize}
    \item $D^{(\alpha,\beta)}(C) = 0$, for $C > 0$.
    \item $D^{(\alpha,\beta)}(e^{ax}) = (e^{ax})^{\beta-1} x^{1-\alpha} (a e^{ax}) = a x^{1-\alpha} e^{a\beta x}$.
    \item $D^{(\alpha,\beta)}(x^n) = (x^n)^{\beta-1} x^{1-\alpha} (n x^{n-1}) = n x^{n\beta - \alpha}$.
    \item $D^{(\alpha,\beta)}\left(x^{\frac{\alpha}{\beta}}\right) = \left(x^{\frac{\alpha}{\beta}}\right)^{\beta-1} x^{1-\alpha} \left( \frac{\alpha}{\beta} x^{\frac{\alpha}{\beta}-1} \right) = \frac{\alpha}{\beta}$.
    \item $D^{(\alpha,\beta)}(\ln x) = (\ln x)^{\beta-1} x^{1-\alpha} \left( \frac{1}{x} \right) = \frac{\ln^{\beta-1}(x)}{x^\alpha}$.
    \item $D^{(\alpha,\beta)}(\ln(bx)) = [\ln(bx)]^{\beta-1} x^{1-\alpha} \left(\frac{1}{x}\right) = \frac{\ln^{\beta-1}(bx)}{x^\alpha}$.
    \item $D^{(\alpha,\beta)}(e^{x^n}) = (e^{x^n})^{\beta-1} x^{1-\alpha} (n x^{n-1} e^{x^n}) = n x^{n-\alpha} e^{\beta x^n}$.
\end{itemize}

\section{Conformable Multiorder Integral}\label{sec:multi_int}

We construct the multiorder integral operator $I^{(\alpha,\beta)}$ as the inverse mapping of $D^{(\alpha,\beta)}$.

\begin{definition}
Let $f(x)$ be a continuous function. A function $F(x)$ is called a conformable multiorder antiderivative of $f(x)$ if $D^{(\alpha,\beta)} F(x) = f(x)$.
\end{definition}

To construct $F(x)$ explicitly, consider the differential equation:
\begin{equation}
F^{\beta-1}(x) x^{1-\alpha} \frac{dF(x)}{dx} = f(x).
\end{equation}
Separating variables yields:
\[
F^{\beta-1}(x) dF(x) = x^{\alpha-1} f(x) dx.
\]
Integrating both sides:
\[
\int F^{\beta-1}(x) dF(x) = \int x^{\alpha-1} f(x) dx \implies \frac{F^\beta(x)}{\beta} = \int x^{\alpha-1} f(x) dx + C_0.
\]
Multiplying by $\beta$ and defining $C = \beta C_0$:
\[
F^\beta(x) = \beta \int x^{\alpha-1} f(x) dx + C = \beta \int f(x) dx^{(\alpha)} + C.
\]
Solving for $F(x)$ yields the formal expression for the conformable multiorder indefinite integral:
\begin{equation}\label{eq:multiorder_integral_def}
I^{(\alpha,\beta)} f(x) = \left[ \beta \int f(x) dx^{(\alpha)} + C \right]^{1/\beta}.
\end{equation}

\begin{remark}
The constant of integration $C$ must reside strictly within the exponent $1/\beta$ due to the non-linear operational structure of $D^{(\alpha,\beta)}$.
\end{remark}

\subsection{Examples of Multiorder Integrals}

Evaluating expression \eqref{eq:multiorder_integral_def} for elementary functions ($0 < \alpha, \beta \le 1$, $x > 0$):
\begin{enumerate}
    \item $I^{(\alpha,\beta)}(\alpha) = \left[ \beta \int \alpha x^{\alpha-1} dx + C \right]^{1/\beta} = (\beta x^\alpha + C)^{1/\beta}$.
    \item $I^{(\alpha,\beta)}(n x^{n\beta-\alpha}) = \left[ \beta \int n x^{n\beta-\alpha} x^{\alpha-1} dx + C \right]^{1/\beta} = \left[ \beta n \int x^{n\beta-1} dx + C \right]^{1/\beta} = (x^{n\beta} + C)^{1/\beta}$.
    \item $I^{(\alpha,\beta)}\left(\frac{a}{\beta} x^{1-\alpha} e^{ax}\right) = \left[ \beta \int \frac{a}{\beta} x^{1-\alpha} e^{ax} x^{\alpha-1} dx + C \right]^{1/\beta} = \left[ a \int e^{ax} dx + C \right]^{1/\beta} = (e^{ax} + C)^{1/\beta}$.
    \item $I^{(\alpha,\beta)}\left(\frac{1}{x^\alpha}\right) = \left[ \beta \int x^{-\alpha} x^{\alpha-1} dx + C \right]^{1/\beta} = \left[ \beta \int \frac{1}{x} dx + C \right]^{1/\beta} = (\beta \ln x + C)^{1/\beta}$.
    \item $I^{(\alpha,\beta)}(a f(x)) = \left[ \beta \int a f(x) dx^{(\alpha)} + C \right]^{1/\beta} = a^{1/\beta} I^{(\alpha,\beta)}(f(x))$ for $a > 0$.
\end{enumerate}

\section{Applications to Multiorder Differential Equations}\label{sec:applications}

We now demonstrate the application of the developed operators to solve non-linear multiorder differential equations ($x, y > 0$). Note that setting $\beta = 1$ reduces these systems to ordinary conformable differential equations \cite{Cadenas2025}.

\subsection{Example 1}
Solve $D^{(0,0)} y = f(x)$.

By Definition~\ref{def:multiorder_der}, $D^{(0,0)} y = y^{-1} x^1 \frac{dy}{dx} = \frac{x}{y} \frac{dy}{dx}$. Thus:
\[
\frac{x}{y} \frac{dy}{dx} = f(x) \implies \frac{1}{y} dy = \frac{f(x)}{x} dx \implies d(\ln y) = \frac{f(x)}{x} dx.
\]
Integrating both sides yields $\ln y = \int \frac{f(x)}{x} dx + C_0$, which gives the general solution:
\[
y(x) = A \exp\left( \int \frac{f(x)}{x} dx \right), \quad A = e^{C_0} > 0.
\]
\textbf{Particular Cases:}
\begin{itemize}
    \item If $f(x) = 0$, then $y(x) = A \cdot e^0 = C$.
    \item If $f(x) = 1$, then $y(x) = A \exp(\ln x) = A x$.
    \item If $f(x) = x^n$ ($n \neq 0$), then $y(x) = A \exp\left(\int x^{n-1} dx\right) = A e^{\frac{x^n}{n}}$.
    \item If $f(x) = \ln x$, then $y(x) = A \exp\left(\int \frac{\ln x}{x} dx\right) = A e^{\frac{1}{2}\ln^2 x} = A e^{\ln^2 x}$ (absorbing scaling constants into $A$).
\end{itemize}

\subsection{Example 2}
Solve $\beta D^{(\alpha,\beta)} y + \rho D^{(\alpha,\rho)} y = f(x)$.

Applying Definition~\ref{def:multiorder_der}:
\[
\beta y^{\beta-1} \frac{dy}{dx^{(\alpha)}} + \rho y^{\rho-1} \frac{dy}{dx^{(\alpha)}} = f(x).
\]
Multiplying by $dx^{(\alpha)} = x^{1-\alpha} dx$:
\[
\left( \beta y^{\beta-1} + \rho y^{\rho-1} \right) dy = f(x) dx^{(\alpha)}.
\]
Integrating directly gives the general implicit solution:
\[
y^\beta + y^\rho = \int f(x) dx^{(\alpha)}.
\]
\textbf{Particular Cases:}
\begin{itemize}
    \item If $f(x) = 0$, then $y^\beta + y^\rho = C$.
    \item If $f(x) = 1$, then $y^\beta + y^\rho = \int x^{\alpha-1} dx = \frac{x^\alpha}{\alpha} + C$.
    \item If $f(x) = x^n$ ($n \neq -\alpha$), then $y^\beta + y^\rho = \int x^{n+\alpha-1} dx = \frac{x^{n+\alpha}}{n+\alpha} + C$.
    \item If $f(x) = x^{-\alpha}$, then $y^\beta + y^\rho = \int x^{-1} dx = \ln x + C$.
\end{itemize}

\subsection{Example 3}
Solve $D^{(\alpha,\beta)} y = \frac{\alpha}{\beta}$ subject to the initial boundary condition $y \to 0^+$ as $x \to 0^+$.

Applying the multiorder integral operator (or directly integrating $y^{\beta-1} d y = \frac{\alpha}{\beta} x^{\alpha-1} dx$):
\[
\frac{y^\beta}{\beta} = \frac{\alpha}{\beta} \frac{x^\alpha}{\alpha} + C_0 \implies y^\beta = x^\alpha + C.
\]
Enforcing the initial condition $\lim_{x \to 0^+} y(x) = 0$ implies $C = 0$. Thus, the unique solution is:
\[
y^\beta = x^\alpha \implies y(x) = x^{\alpha/\beta}.
\]

\subsection{Example 4}
Solve the variable-order multiorder equation:
\begin{equation}\label{eq:ex4_orig}
r D^{(\alpha+\beta, 2\beta)} y + s D^{(\alpha,\beta)} y = x^{n-\alpha-\beta} y^{\beta-m} \left( r x^\beta + s y^\beta \right),
\end{equation}
where $0 < \beta \le 1/2$, $0 < \alpha+\beta \le 1$, $n > \alpha$, and $0 \neq m < \beta$.

Applying property 5 of Theorem~\ref{thm:multiorder_properties} with $\rho = \beta$ and $\eta = \beta$:
\[
D^{(\alpha+\beta, 2\beta)} y = y^\beta x^{-\beta} D^{(\alpha,\beta)} y = \left(\frac{y}{x}\right)^\beta D^{(\alpha,\beta)} y.
\]
Substituting this identity into \eqref{eq:ex4_orig} yields:
\[
r \left(\frac{y}{x}\right)^\beta D^{(\alpha,\beta)} y + s D^{(\alpha,\beta)} y = x^{n-\alpha-\beta} y^{\beta-m} \left( r x^\beta + s y^\beta \right).
\]
Factoring $D^{(\alpha,\beta)} y$ on the left-hand side:
\[
D^{(\alpha,\beta)} y \left( r \frac{y^\beta}{x^\beta} + s \right) = D^{(\alpha,\beta)} y \left( \frac{r y^\beta + s x^\beta}{x^\beta} \right) = x^{n-\alpha-\beta} y^{\beta-m} \left( r x^\beta + s y^\beta \right).
\]
Canceling the common term $(r x^\beta + s y^\beta)$ from both sides:
\[
\frac{D^{(\alpha,\beta)} y}{x^\beta} = x^{n-\alpha-\beta} y^{\beta-m} \implies D^{(\alpha,\beta)} y = x^{n-\alpha} y^{\beta-m}.
\]
Expanding $D^{(\alpha,\beta)} y = y^{\beta-1} x^{1-\alpha} \frac{dy}{dx}$:
\[
y^{\beta-1} x^{1-\alpha} \frac{dy}{dx} = x^{n-\alpha} y^{\beta-m} \implies y^{m-1} dy = x^{n-1} dx.
\]
Integrating both sides yields the exact analytical solution:
\[
\frac{y^m}{m} = \frac{x^n}{n} + C_0 \implies n y^m = m x^n + C.
\]

\subsection{Example 5}
Solve $D^{(\alpha,\beta)} y = f(y)$.

Expanding via Definition~\ref{def:multiorder_der}:
\[
y^{\beta-1} x^{1-\alpha} \frac{dy}{dx} = f(y).
\]
Separating variables:
\[
\frac{y^{\beta-1}}{f(y)} dy = x^{\alpha-1} dx.
\]
Integrating gives the general implicit solution:
\[
\int \frac{y^{\beta-1}}{f(y)} dy = \frac{x^\alpha}{\alpha} + C_0 \implies \frac{x^\alpha}{\alpha} = \int \frac{y^{\beta-1}}{f(y)} dy + C.
\]
\textbf{Particular Cases:}
\begin{itemize}
    \item If $f(y) = 1$, then $\frac{x^\alpha}{\alpha} = \frac{y^\beta}{\beta} + C \implies \beta x^\alpha - \alpha y^\beta = C_1$.
    \item If $f(y) = y^m$ ($m \neq \beta$), then $\frac{x^\alpha}{\alpha} = \int y^{\beta-m-1} dy = \frac{y^{\beta-m}}{\beta-m} + C \implies (\beta-m) x^\alpha = \alpha y^{\beta-m} + C_1$.
    \item If $f(y) = y^\beta$, then $\frac{x^\alpha}{\alpha} = \int y^{-1} dy = \ln y + C \implies y(x) = A \exp\left( \frac{x^\alpha}{\alpha} \right)$.
\end{itemize}

\subsection{Example 6}
Compute $D^{(\alpha,\beta)} y$ for $y(x) = A \exp\left(\frac{x^\alpha}{\alpha}\right)$.

Applying Definition~\ref{def:multiorder_der} and computing $\frac{dy}{dx} = A x^{\alpha-1} \exp\left(\frac{x^\alpha}{\alpha}\right)$:
\begin{align*}
D^{(\alpha,\beta)} y &= \left( A \exp\left(\frac{x^\alpha}{\alpha}\right) \right)^{\beta-1} x^{1-\alpha} \left( A x^{\alpha-1} \exp\left(\frac{x^\alpha}{\alpha}\right) \right) \\
&= A^{\beta-1} \exp\left( \frac{(\beta-1)x^\alpha}{\alpha} \right) A \exp\left( \frac{x^\alpha}{\alpha} \right) = A^\beta \exp\left( \frac{\beta x^\alpha}{\alpha} \right) = \left( A \exp\left(\frac{x^\alpha}{\alpha}\right) \right)^\beta = y^\beta.
\end{align*}

\section{Conclusions}\label{sec:conclusions}

This work introduced a rigorous framework for multiorder non-linear differential operators, broadening the analytical scope of fractional calculus. By defining the multiorder operator as a functional ratio of fractional derivatives, we established an operational paradigm that characterizes derivatives of order $\beta$ of a dependent variable relative to order $\alpha$ of an independent variable. 

Focusing on conformable derivatives, we derived fundamental calculus rules—including scaling, product, quotient, chain, and order-shift properties—with complete mathematical rigor. Furthermore, we constructed the conformable multiorder integral operator as a formal inverse and demonstrated its application in obtaining exact analytical solutions to several classes of non-linear multiorder differential equations. This operational framework provides a solid foundation that can be readily extended to other local and non-local fractional derivatives.

\section{Final Comments and Future Directions}\label{sec:final_comments}

The multiorder operational framework developed in this paper opens several avenues for further theoretical and applied research:

\begin{enumerate}
    \item \textbf{Multivariable Multiorder Calculus:} Extension to multiorder partial derivatives, multiorder gradient, divergence, and curl vector operators on manifolds.
    \item \textbf{Complex and Variable-Order Extensions:} Formulation of multiorder operators with complex-valued orders $\alpha(z), \beta(z)$ or time-varying orders $\alpha(t), \beta(t)$.
    \item \textbf{Differential Equations and Numerical Methods:} Development of analytical methods and stable numerical schemes (e.g., finite difference, finite element methods) for multiorder differential, integral, and integro-differential equations in engineering and physical modeling.
    \item \textbf{Linear Systems Analysis:} Investigation of stability, controllability, and observability in state-space systems governed by multiorder operators.
    \item \textbf{Functional Spaces:} Construction of multiorder Sobolev and Besov spaces, establishing embedding theorems and trace properties.
    \item \textbf{Dynamical and Stochastic Systems:} Analysis of deterministic and stochastic multiorder dynamical systems, including multiorder Brownian motion and anomalous transport processes.
    \item \textbf{Fractal and Complex Dynamics:} Modeling physical phenomena on fractal sets using multiorder differential operators.
    \item \textbf{Interdisciplinary Applications:} Application of multiorder mathematical modeling to solve real-world problems in viscoelasticity, electrochemistry, control theory, and quantitative finance.
    \item \textbf{Optimization and Algorithms:} Design of optimization algorithms and machine learning architectures (e.g., Physics-Informed Neural Networks) incorporating multiorder fractional gradients.
    \item \textbf{Generalized Base Operators:} Construction of multiorder operators using alternative foundational fractional derivatives, such as Riemann-Liouville, Caputo, Hadamard, or generalized Mittag-Leffler memory kernels.
    \item \textbf{Identities and Inequalities:} Derivation of multiorder integral inequalities (e.g., Hermite-Hadamard, Cauchy-Schwarz, Gronwall-type inequalities).
\end{enumerate}

These prospective developments hold significant potential across diverse areas of applied mathematics, including numerical linear algebra, probability theory, geometric algebra, differential geometry, and functional analysis.

\end{document}